\documentclass[12pt]{amsart}

\usepackage{amsmath,amssymb,amsbsy,amsfonts,amsthm,latexsym, old-arrows,
	amsopn,amstext,amsxtra,euscript,amscd,mathrsfs}
\usepackage{amsmath,amssymb,amsbsy,amsfonts,latexsym,amsopn,amstext,cite,
	amsxtra,euscript,amscd,bm}
\usepackage{mathtools}
\usepackage{todonotes}
\usepackage{url}
\usepackage[colorlinks,linkcolor=blue,anchorcolor=blue,citecolor=blue,backref=page]{hyperref}

\renewcommand*{\backref}[1]{}
\renewcommand*{\backrefalt}[4]{%
	\ifcase #1 (Not cited.)%
	\or        (p.\,#2)%
	\else      (pp.\,#2)%
	\fi}


\begin{document}

\newtheorem{counter}{Theorem}
\newcommand{\jed}{\mbox{\boldmath$1$}}
\newcommand{\To}{\longrightarrow}
\newcommand{\A}{\mathcal{A}}
\newcommand{\C}{\mathbb{C}}
\renewcommand{\S}{\mathcal{S}}
\newcommand{\F}{\mathcal{F}}
\newcommand{\R}{\mathbb{R}}
\newcommand{\E}{\mathbb{E}}
\newcommand{\1}{\mathbb{1}}
\renewcommand{\P}{\mathbb{P}}
\newcommand{\Z}{\mathbb{Z}}
\newcommand{\N}{\mathbb{N}}
\newcommand{\W}{\mathbb{W}}
\newcommand{\T}{\mathbb{T}}
\newcommand{\M}{\mathcal{M}}
\newcommand{\U}{\mathcal{U}}
\newcommand{\V}{\mathcal{V}}
\newcommand{\Q}{\mathcal{Q}}
\renewcommand{\S}{\mathcal{S}}
\newcommand{\B}{\mathcal{B}}
\newcommand{\ce}{\mathbb E}
\newcommand{\I}{\mathcal{I}}
\newcommand{\setdef}{\stackrel {\rm {def}}{=}}
\newcommand{\rea}{\text{Re}}
\newcommand{\ds}{\displaystyle}
\newcommand{\script}{\scriptstyle}
\newcommand{\tif}{\tilde f}
\newcommand{\tig}{\tilde g}
\newtheorem*{thank}{\ \ \ \textbf{Acknowledgment}}
\newcounter{tictac}
\newenvironment{fleuveA}{
	\begin{list}{$(\emph{\textbf{A\arabic{tictac}}})$}{\usecounter{tictac}
			\leftmargin 1cm\labelwidth 2em}}{\end{list}}
\def\1{\,\rlap{\mbox{\small\rm 1}}\kern.15em 1}
\def\ind#1{\1_{#1}}
\def\build#1_#2^#3{\mathrel{\mathop{\kern 0pt#1}\limits_{#2}^{#3}}}
\def\tend#1#2{\build\hbox to 12mm{\rightarrowfill}_{#1\rightarrow #2}^{ }}
\def\cor#1{\build\longlefrightarrow_{}^{#1}}
\def\tendn{\tend{n}{\infty}}
\def\converge#1#2#3#4{\build\hbox to
	#1mm{\rightarrowfill}_{#2\rightarrow #3}^{\hbox{\scriptsize #4}}}


\newcommand{\printdate}{\today}
\theoremstyle{definition}

\newtheorem{thm}{Theorem}[section]
\newtheorem{prop}[thm]{Proposition}
\newtheorem{lem}[thm]{Lemma}
\newtheorem{defn}[thm]{Definition}
\newtheorem{notation}[thm]{Notation}
\newtheorem{example}[thm]{Example}
\newtheorem{conj}[thm]{Conjecture}
\newtheorem{prob}[thm]{Problem}
\newtheorem{Prop}[thm]{Proposition}
\newtheorem{que}[thm]{Question}
\newtheorem{rem}[thm]{Remark}
\newtheorem{rems}[thm]{Remarks}

\newtheorem{Cor}[thm]{Corollary}
\newtheorem{thmnonumber}{Theorem}
\newtheorem{fact}[thmnonumber]{Fact}
\newcommand{\cb}{{\mathcal B}}
\newcommand{\ca}{{\mathcal A}}
\newcommand{\cc}{{\mathcal C}}
\newcommand{\cd}{{\mathcal D}}
\newcommand{\cf}{{\mathcal F}}
\newcommand{\ch}{{\mathcal H}}
\newcommand{\cm}{{\mathcal M}}
\newcommand{\hatca}{\widehat{\mathcal A}}
\newcommand{\hatcb}{\widehat{\mathcal B}}
\newcommand{\hatcc}{\widehat{\mathcal C}}
\newcommand{\hatcd}{\widehat{\mathcal D}}
\newcommand{\ot}{\otimes}
\newcommand{\la}{\lambda}
\newcommand{\tf}{T_{\varphi}}
\newcommand{\va}{\varphi}
\newcommand{\vep}{\varepsilon}
\newcommand{\ov}{\overline}
\newcommand{\un}{\underline}
\newcommand{\ux}{\underline{X}}
\newcommand{\uy}{\underline{Y}}
\newcommand{\hatuy}{\widehat{\underline{Y}}}
\newcommand{\hatux}{\widehat{\underline{X}}}
\newcommand{\uz}{\underline{Z}}
\newcommand{\ka}{\mathcal{K}}
\newcommand{\si}{\sigma}
\newcommand{\bmu}{\bm \mu}
\newcommand{\bmnu}{\bm \nu}
\newcommand{\bml}{\bm \lambda}


\newtheorem*{xrem}{Remark}
\newtheorem*{xconj}{Conjecture}
\newtheorem*{xques}{Question}
\newcommand{\intBohr}{\mathop{\mathlarger{\mathlarger{\mathlarger{\landupint}}}}}
\newcommand{\Prod}{\mathop{\mathlarger{\mathlarger{\mathlarger{\prod}}}}}
\newcommand{\Sum}{\mathlarger{\mathlarger{\sum}}}
\def\Max#1{\mathlarger{\max_{#1}}}
\def\Min#1{\mathlarger{\min_{#1}}}
\def\Sup#1{\mathlarger{\sup_{#1}}}
\def\En#1{(\lfloor #1 \rfloor)}

\def \d{{\rm d}}
\def \f{\overline{f}}
\def \g{\overline{g}}
\def \e{\varepsilon}
\def\InvFourier#1{\stackrel{\vee}{#1}}
\def\Invstar#1{\widetilde{#1}}
\def \Cov{\text{Cov}}
\def\Pr{\mathbb P}
\newcommand{\beq}{\begin{equation}}
\newcommand{\eeq}{\end{equation}}
\newcommand{\xbm}{(X,{\mathcal B},\mu)}
\newcommand{\xbmt}{(X,{\mathcal B},\mu,T)}
\newcommand{\tilxbm}{(\tilde{X},\tilde{{\cal B}},\tilde{\mu})}
\newcommand{\zdr}{(Z,{\mathcal D},\rho)}
\newcommand{\ycn}{(Y,{\mathcal C},\nu)}
\title[On the polynomials  homogeneous ergodic $\cdots$"]{ On the polynomials homogeneous ergodic bilinear averages with Liouville and M\"{o}bius weights}
\author{E. H. el Abdalaoui}

\subjclass[2010]{Primary: 37A30;  Secondary: 28D05, 5D10, 11B30, 11N37, 37A45}

\keywords{multilinear ergodic averages, F\"{u}rstenberg's problem of a.e. convergence, Liouville function, M\"{o}bius function, Birkhoff ergodic theorem,
	Bourgain's double recurrence theorem, Zhan's estimation, Davenport-Hua's estimation, Bourgain-Sarnak's theorem.\\
	\printdate }



\begin{abstract} We establish a generalization of Bourgain double recurrence theorem by proving that for any map $T$  acting on a probability space $(X,\mathcal{A},\mu)$, and for any non-constant polynomials $P,Q$ mapping natural numbers to themselves,  
	 for any $f,g \in L^2(X)$, and for almost all $x \in X$, we have
	
	\[
	\frac{1}{N}¶
	\sum_{n=1}^{N}\boldsymbol{\nu}(n)¶
	f(T^{P(n)}x)g(T^{Q(n)}x) \underset{N¶
		\rightarrow{+\infty}} {\mathrel{\hbox to 12mm{\rightarrowfill}}}0,¶
	\]
	
	where $\boldsymbol{\nu}$ is the Liouville function or the M\"{o}bius¶
	function.  
\end{abstract}

\maketitle
\section{Introduction}
The purpose of this paper is to generalize Bourgain double recurrence theorem (BDRT) proved in \cite{Bourgain-D}. The original proof of this theorem seems to be one of the more difficult proofs in ergodic theory. It used the $\lambda$-separated lemma based on L\'epingle's Lemma from the theory of Martingale as in the proof of Bourgain polynomial ergodic theorem \cite{Bourgain-IHP} (see also \cite{Thouvenot}). But, therein the difficulty lies on some uniformity with respect to the orbit (see \cite[section 4]{Bourgain-D}). Later, Demeter provided an alternative proofs in \cite{Demeter}, and Lacey \cite{Lacey} extended  bilinear  maximal inequality into
$L^{p}$  with  $2/3<p<1 .$ Besides, I. Assani proved that for the  special cases of weak mixing maps with singular spectrum on its Pinsker algebra, the F\"{u}rstenberg's multilinear ergodic average converge almost surely \cite{Assani}. He also noticed that the proof of (BDRT) can be made more accessible for the case of Wiener-Wintner systems \cite{AssaniW}. Subsequently, the author proved that there is a subsequence for which the convergence a.e. of the ergodic multilinear averages holds \cite{elabdal-F}. Very recently, the author provided a simultaneously simple proof of Birkhoff ergodic theorem and Bourgain  homogeneous ergodic bilinear theorem with an extension to polynomials and polynomials in primes \cite{elabdal-poly}.   Here, we focus on the almost everywhere convergence of the homogeneous ergodic bilinear averages with weight. This was initiated by  I. Assani, D. Duncan, and R. Moore in \cite{AssaniDM}. Therein, the authors proved a Wiener-Wintner version of BDRT, that is, the exponential sequences $(e^{2\pi i nt})_{n \in \Z}$ are good weight for the homogeneous ergodic bilinear averages. 
Subsequently, I. Assani and R. Moore  showed that the polynomials exponential sequences $\big(e^{2\pi i P(n)}\big)_{n \in \Z}$  are also uniformly good weights for the homogeneous ergodic bilinear averages
\cite{AssaniM}. One year later, I. Assani \cite{Assani-Nil} and P. Zorin-Kranich \cite{Zorich} proved independently that the nilsequences are
uniformly good weights for the homogeneous ergodic bilinear averages. But, their proofs depend on Bourgain's theorem. Very recently, the author extended Bourgain-Sarnak theorem by proving that the M\"{o}bius and  Liouville functions are a good weight for the homogeneous ergodic bilinear averages \cite{IJL}. But there is a gap in the proof. Here, we will generalize that theorem to the polynomial cases and we will fill the gap. Our proof follows closely the oscillation method of Bourgain combined with the Calder\'{o}n transference principal. For a nice account on this method, we refer to \cite{Nair}, \cite{Thouvenot}. Despite the fact that the classical spectral analysis can not be applied to study the  F\"{u}rstenberg's multilinear ergodic average, we develop here  a spectral  analysis tool based on the Fourier transform for its studies. This is accomplished by applying Cald\'eron principal and the discrete Fourier transform which can be seen as a spectral isomorphism. This point was raised and developed in \cite{elabdal-poly}. We notice that in this setting, the dynamics on the diagonal in F\"{u}rstenberg's ergodic average is interpreted as an operation on the kernels which we introduced here (see equation \eqref{kernel}). The kernel is an average mass on the particles $X=\big\{x_1,\cdots,x_N \big\}$ and the operation is on the diagonal of $X \times X$  denoted by $\odot$. We stress, as it was noticed \cite{elabdal-poly}, that the product on the observable functions in $\ell^2$ functions is interpreted as a convolution. It follows that Babenko–Beckner inequalities come into play.We believe that our tool combined with harmonic analyis methods can be useful to address the problem of the convergence almost everywhere of F\"{u}rstenberg's multilinear ergodic average.\\ 

We recall that the problem of the convergence almost everywhere (a.e.) of the ergodic  multilinear averages was initiated by F\"{u}rstenberg in \cite[Question 1 p.96]{Fbook}. Bourgain answered that question by proving the following:\\

Let $T$ be a map acting on a probability space $(X,\mathcal{A},\nu)$, and $a,b \in\mathbb{Z}$,¶
then for any $f,g \in L^\infty(X)$, the averages

\[
\frac{1}{N}¶
\sum_{n=1}^{N}
f(T^{an}x)g(T^{bn}x)
\]
converge for almost every $x$.

\section{Set up and Tools}
The Liouville function is defined for the positive integers $n$ by
$$
\bml(n)=(-1)^{\Omega(n)},
$$
where $\Omega(n)$ is the length of the word $n$ is the alphabet of prime, that is, $\Omega(n)$ is the number of prime factors of $n$ counted with multiplicities. The M\"{o}bius function is given by
\begin{equation}\label{Mobius}
\bmu(n)= \begin{cases}
1 {\rm {~if~}} n=1; \\
\bml(n)  {\rm {~if~}} n
{\rm {~is~the~product~of~}} r {\rm {~distinct~primes}}; \\
0  {\rm {~if~not}}
\end{cases}
\end{equation}
These two functions are of great importance in number theory since the Prime Number Theorem is equivalent to \begin{equation}\label{E:la}
\sum_{n\leq N}\bml(n)={\rm o}(N)=\sum_{n\leq N}\bmu(n).
\end{equation}
Furthermore, there is a connection between these two functions and Riemann zeta function, namely
$$
\frac1{\zeta(s)}=\sum_{n=1}^{\infty}\frac{\bmu(n)}{n^s} \text{ for any }s\in\mathbb{C}\text{ with }\rea(s)>1.
$$
Moreover, Littlewood proved that the estimate
\[
\left|\ds \sum_{n=1}^{x}\bmu(n)\right|=O\left(x^{\frac12+\varepsilon}\right)\qquad
{\rm as} \quad  x \longrightarrow +\infty,\quad \forall \varepsilon >0
\]
is equivalent to the Riemann Hypothesis (RH) (\cite[pp.315]{Titchmarsh}).\\

We recall that the proof of Sarnak-Bourgain theorem \cite{Sarnak} is based on the following Davenport-Hua's estimation \cite{Da},  \cite[Theorem 10.]{Hua}: for each $A>0$, for any $k \geq 1$, we have
\begin{equation}\label{vin}
\max_{z \in \T}\left|\displaystyle\sum_{n \leq N}z^{n^k}\bmu(n)\right|\leq C_A\frac{N}{\log^{A}N}\text{ for some }C_A>0.
\end{equation}
We refer to \cite{elabdalDCDS} and \cite{Cuny-Weber} for this proof.  Here, we will need the following extension due to Green-Tao in the nilsequences setting. We refer to Theorem 1.1 in \cite{GreenT} for the complete statement  
and for the definition of the nilsequences. Here, precisely,  we need only the following corollary.

\begin{lem}\label{GTao}Let $P$ be a non-constant polynomial mapping natural numbers to themselves. Then, for any $N \geq 1$, we have
\begin{equation}\label{GT}
\max_{\theta \in [0,2\pi)}\left|\frac{1}{N}\displaystyle\sum_{n \leq N}\bmu(n) e^{i P(n) \theta}\right|\leq \frac{C_A}{\log^{A}N}\text{ for some }C_A>0.
\end{equation}	
\end{lem}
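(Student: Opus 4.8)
The plan is to deduce the estimate directly from the Green–Tao theorem on the orthogonality of the M\"obius function to nilsequences \cite{GreenT}, the only genuine subtlety being the uniformity in the phase $\theta$. Writing $\theta = 2\pi\alpha$ with $\alpha \in [0,1)$ and $P(n) = c_d n^d + \cdots + c_1 n + c_0$ with $c_j \in \Z$ and $d = \deg P$, one has $e^{iP(n)\theta} = e^{2\pi i \alpha c_0}\, e^{2\pi i(\alpha c_1 n + \cdots + \alpha c_d n^d)}$, so up to the harmless unimodular constant $e^{2\pi i \alpha c_0}$ it suffices to bound the exponential sum attached to the polynomial phase $n \mapsto e^{2\pi i \sum_{j=1}^{d}\alpha c_j n^j}$, uniformly in $\alpha$.

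First I would realize this whole family of phases as nilsequences of a single bounded complexity. On the nilmanifold $\T^d = \R^d/\Z^d$, equipped with the degree-$d$ filtration for which polynomial orbits up to degree $d$ are admissible, set $g(n)\Gamma = \big(\alpha c_1 n,\ \alpha c_2 n^2,\ \ldots,\ \alpha c_d n^d\big) \bmod \Z^d$ and let $F(x_1,\ldots,x_d) = e^{2\pi i(x_1 + \cdots + x_d)}$. Then $F(g(n)\Gamma) = e^{2\pi i \sum_{j=1}^{d}\alpha c_j n^j}$, the Lipschitz norm of $F$ is $O_d(1)$, and the dimension and complexity of the nilmanifold depend only on $d$; crucially, none of these data depend on $\alpha$, which has been absorbed entirely into the polynomial orbit $g$. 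Next I would invoke the quantitative form of the Green–Tao theorem, in which the implied constant depends only on $A$, on $\|F\|_{\mathrm{Lip}}$, and on the dimension and complexity of $G/\Gamma$, but \emph{not} on the polynomial sequence $g$. Applied to the data above this yields, for every $\alpha \in [0,1)$,
\[
\left|\frac{1}{N}\sum_{n \leq N}\bmu(n)\, e^{2\pi i \sum_{j=1}^{d}\alpha c_j n^j}\right| \le \frac{C_A}{\log^A N},
\]
with $C_A = C_A(P)$ independent of $\alpha$. Reinstating $e^{2\pi i \alpha c_0}$ (which does not affect the modulus) and taking the supremum over $\theta \in [0,2\pi)$ gives the claim, the supremum being harmless precisely because the bound is uniform in $\alpha$.

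The main obstacle is exactly this uniformity in $\theta$. A naive appeal to the Green–Tao theorem for each fixed $\theta$ produces a constant depending on the nilsequence, and hence a priori on $\alpha$, through which one cannot then pass the maximum. The point to check carefully is that the realization above keeps the nilmanifold, its complexity, and $\|F\|_{\mathrm{Lip}}$ fixed while letting only the orbit vary, so that the orbit-uniform version of Green–Tao applies verbatim. I note also that one cannot sidestep this by reducing to the monomial Davenport–Hua bound \eqref{vin}: the product $\prod_{j} e^{2\pi i \alpha c_j n^j}$ of monomial phases is not itself a monomial phase, and it is precisely this mixing of degrees that forces the nilsequence formulation rather than a direct appeal to \eqref{vin}.
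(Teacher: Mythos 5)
Your proposal is correct and takes essentially the same route as the paper: the paper gives no separate proof, stating the lemma precisely as a corollary of Theorem 1.1 of Green--Tao \cite{GreenT}, and your write-up supplies exactly that reduction, correctly identifying the orbit-uniformity of the Green--Tao bound (constants depending only on the nilmanifold, filtration degree, $A$, and $\|F\|_{\mathrm{Lip}}$, not on $g$) as the source of the uniformity in $\theta$. One inessential slip: a polynomial mapping $\N$ to $\N$ need not have integer coefficients (e.g.\ $n(n+1)/2$), but this is harmless since your argument never uses integrality of the $c_j$ --- the Green--Tao bound is uniform over polynomial orbits with arbitrary real coefficients.
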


\noindent The inequalities \eqref{vin} and \eqref{GT} can be established also for the M\"{o}bius function by applying carefully the following identity:\\

$$\bml(n)=\sum_{d:d^2 | n}\bmu\Big(\frac{n}{d^2}\Big).$$

\section{Some tools on the oscillation method and Calder\'{o}n transference principle}
 Let $k \geq 2$ and $(X,\ca,T_i,\mu)_{i=1}^{k}$ be a family of dynamical systems, that is, for each $i=1,\cdots, k$, $T_i$ is a measure-preserving transformation, ($\forall A \in \ca$, $\mu(T_i^{-1}A)=\mu(A)$.). The sequence of complex number $(a_n)$ is said to be good weight in $L^{p_i}(X,\mu)$, $p_i\geq1$, $i=1,\cdots,k$, with 
 $\sum_{i=1}^{k}\frac1{p_i}=1,$ if, for any $f_i \in L^{p_i}(X,\mu)$, $i=1,\cdots,k$,
 the ergodic $k$-multilinear averages   
 $$\frac1{N}\sum_{j=1}^{N}a_j\prod_{i=1}^{k}f_i(T_i^jx)$$
 converges a.e.. The maximal multilinear ergodic inequality is said to hold in $L^{p_i}(X,\mu)$, $p_i\geq1$, $i=1,\cdots,k$, with 
 $\sum_{i=1}^{k}\frac1{p_i}=1,$ if, for any $f_i \in L^{p_i}(X,\mu)$, $i=1,\cdots,k$, the maximal function given by 
 $$M(f_1,\cdots,f_k)(x)=\sup_{N \geq 1}\Big|\frac{1}{N}\sum_{j=1}^{N}a_j\prod_{i=1}^{k}f_i(T_i^jx)\Big|$$
 satisfy the weak-type inequality 
 $$\sup_{\lambda > 0}\Bigg(\lambda \mu\Big\{x~~:~~M(f)(x)>\lambda \Big\}\Bigg) \leq C \prod_{i=1}^{k}\big\|f_i\big\|_{p_i},$$
 where $C$ is an absolutely constant.\\
 
 As far as the author know, it is seems that it is not known whether the classical maximal multilinear ergodic inequality ($a_n=1$, for each $n$) holds 
 for the general case $n \geq 3$. Nevertheless, we have the following Calder\'{o}n transference principal in the homogeneous case. 
 
\begin{Prop}\label{CalderonP} Let $(a_n)$ be a sequence of complex number and assume that for any $\phi,\psi \in \ell^2(\Z)$, for any non-constant polynomials $P,Q$ mapping natural numbers to themselves,  for any $1 \leq p,q,r \leq +\infty$  such that $\frac{1}{r}=\frac{1}{p}+\frac{1}{q}$,  we have
	\begin{align*}
			\Big\|\sup_{ N  \geq 1}\Big|\frac1{N}\sum_{n=1}^{N}a_n \phi(j+P(n))\psi(j+Q(n)) 
		\Big|\Big\|_{\ell^r(\Z)} \\\leq  C.
		\big\|\phi\big\|_{\ell^p(\Z)}\big\|\psi\big\|_{\ell^q(\Z)},
	\end{align*}

	where $C$ is an absolutely constant. Then, for any dynamical system $(X,\A,T,\mu)$, for any $f \in L^p(X,\mu)$ and $g \in L^q(X,\mu)$ , we have 
		\begin{align*}
	\Big\|\sup_{N \geq 1}\Big|\frac1{N}\sum_{n=1}^{N}a_n f(T^{P(n)}x)g(T^{Q(n)}x) \Big|\Big\|_1\\ \leq
	C \big\|f\big\|_{p}\big\|g\big\|_{q}.
	\end{align*}
\end{Prop}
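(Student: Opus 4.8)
The plan is to run Calder\'on's transference argument in the bilinear homogeneous setting. First I would reduce to a finite supremum: by the monotone convergence theorem it suffices to bound, uniformly in $N_0$, the $L^1(X)$-norm of
$$
G_{N_0}(x) = \sup_{1 \le N \le N_0}\Big|\frac1N\sum_{n=1}^N a_n f(T^{P(n)}x)g(T^{Q(n)}x)\Big|,
$$
and then let $N_0 \to \infty$. I fix $N_0$ and set $R = R(N_0) = \max_{1 \le n \le N_0}\max(P(n), Q(n))$, which is finite and depends only on $N_0$ (not on the window length introduced below). Now I fix a large integer $L$ and, for $\mu$-a.e.\ $x$, introduce the finitely supported sequences
$$
\phi_x(m) = f(T^m x)\,\mathbf 1_{[0,L+R)}(m), \qquad \psi_x(m) = g(T^m x)\,\mathbf 1_{[0,L+R)}(m),
$$
which lie in every $\ell^s(\Z)$.

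The crucial observation is that for every $j$ with $0 \le j < L$ and every $n \le N_0$ the offsets $j + P(n)$ and $j + Q(n)$ lie in $[0, L+R)$, so the truncation is harmless and
$$
G_{N_0}(T^j x) = \sup_{1 \le N \le N_0}\Big|\frac1N\sum_{n=1}^N a_n \phi_x(j+P(n))\psi_x(j+Q(n))\Big| \le F_x(j),
$$
where $F_x(j) = \sup_{N \ge 1}\big|\frac1N\sum_{n=1}^N a_n \phi_x(j+P(n))\psi_x(j+Q(n))\big|$ is exactly the discrete bilinear maximal function appearing in the hypothesis.

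Next I would integrate using the invariance of $\mu$. Since $\int_X G_{N_0}\,d\mu = \int_X G_{N_0}\circ T^j\,d\mu$ for each $j$, averaging over $0 \le j < L$ gives
$$
\int_X G_{N_0}\,d\mu = \frac1L\sum_{j=0}^{L-1}\int_X G_{N_0}(T^j x)\,d\mu \le \frac1L\int_X\Big(\sum_{j=0}^{L-1}F_x(j)\Big)\,d\mu.
$$
H\"older on the finite sum over $j$ (valid since $r \ge 1$, which follows from $1/r = 1/p + 1/q$ with $p,q \ge 1$) bounds $\sum_{j=0}^{L-1}F_x(j) \le L^{1-1/r}\|F_x\|_{\ell^r(\Z)}$, and the assumed maximal inequality yields $\|F_x\|_{\ell^r(\Z)} \le C\,\|\phi_x\|_{\ell^p}\|\psi_x\|_{\ell^q}$. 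A second H\"older application in $x$—the generalized H\"older inequality with $1/r = 1/p+1/q$ together with $\|\cdot\|_{L^1}\le\|\cdot\|_{L^r}$ on the probability space $X$—combined with the moment identities $\int_X\|\phi_x\|_{\ell^p}^p\,d\mu = (L+R)\|f\|_p^p$ and $\int_X\|\psi_x\|_{\ell^q}^q\,d\mu = (L+R)\|g\|_q^q$ (both by measure preservation) gives $\int_X\|\phi_x\|_{\ell^p}\|\psi_x\|_{\ell^q}\,d\mu \le (L+R)^{1/r}\|f\|_p\|g\|_q$. Assembling these estimates yields
$$
\int_X G_{N_0}\,d\mu \le C\Big(\frac{L+R}{L}\Big)^{1/r}\|f\|_p\|g\|_q .
$$
Letting $L \to \infty$ with $R$ fixed sends $((L+R)/L)^{1/r}\to 1$, and then $N_0 \to \infty$ by monotone convergence finishes the proof.

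I expect the main difficulty to be bookkeeping rather than conceptual: one must choose the truncation window $[0,L+R)$ so that all polynomial orbit offsets $P(n),Q(n)$ with $n\le N_0$ fall inside it, while keeping $R$ dependent only on $N_0$ and not on $L$, so that the ratio $(L+R)/L$ tends to $1$. The only other delicate point is that both H\"older steps (in $j$ and in $x$) require $r\ge 1$; this is precisely the constraint $1\le r$ built into the statement, so the argument never leaves the admissible range of exponents.
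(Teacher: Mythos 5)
Your argument is correct and is essentially the paper's own approach: the paper states this proposition without a separate proof, but the truncation-to-a-finite-window, orbit-identification, integration via measure preservation, and H\"older steps you use are exactly the Calder\'on transference argument it carries out for Theorem \ref{MaximalI}, and your bookkeeping with $R(N_0)$ and the limit $L\to\infty$ is a correct (in fact slightly more careful, regarding polynomial ranges) instantiation of it. One small correction: $r\ge 1$ does \emph{not} follow from $1/r=1/p+1/q$ with $p,q\ge 1$ (that only gives $r\ge 1/2$); it is, however, an explicit hypothesis of the proposition, so both of your H\"older applications remain justified.
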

We further have
\begin{Prop}\label{CalderonP2} Let $(a_n)$ be a sequence of complex number and  assume that for any $\phi,\psi \in \ell^2(\Z)$, for any $\lambda>0$, for any integer $J \geq 2$, for any non-constant polynomials $P,Q$ mapping natural numbers to themselves, for any $1 \leq p,q\leq +\infty$  such that $\frac{1}{p}+\frac{1}{q}=1$, , we have
\begin{align*}
	\sup_{\lambda > 0} \Big(\lambda\Big|\Big\{1 \leq j \leq J~~:~~ \sup_{ N  \geq 1}\Big|\frac1{N}\sum_{n=1}^{N}a_n \phi(j+P(n))\psi(j+Q(n)) 
	\Big|> \lambda \Big\}\Big|\Big) \\\leq C \big\|\phi\big\|_{\ell^p(\Z)}\big\|\psi\big\|_{\ell^q(\Z)},
\end{align*}
	where $C$ is an absolutely constant. Then, for any dynamical system $(X,\A,T,\mu)$, for any $f,g \in L^2(X,\mu)$, we have 
\begin{align*}	
	\sup_{\lambda> 0} \Big(\mu\Big\{x \in X~~:~~\sup_{N \geq 1}\Big|\frac1{N}\sum_{n=1}^{N}a_n f(T^{P(n)}x)g(T^{Q(n)}x) \Big| > \lambda \Big\}\Big)\\ \leq  C
	\big\|f\big\|_{2}.\big\|g\big\|_{2}.
\end{align*} 
\end{Prop}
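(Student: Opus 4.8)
The plan is to run the standard Calder\'{o}n transference principle, now in the bilinear homogeneous setting, transferring the assumed weak-type inequality on the model system $\Z$ (with the shift) to the arbitrary measure-preserving system $(X,\A,T,\mu)$ by freezing the orbit of a point and truncating. Throughout I take $p=q=2$ in the hypothesis, which is exactly the case needed for the conclusion. First I would reduce to a finite supremum: write $F_{N_0}(x)=\sup_{1\le N\le N_0}\big|\frac1N\sum_{n=1}^{N} a_n f(T^{P(n)}x)g(T^{Q(n)}x)\big|$. Since $F_{N_0}$ increases pointwise to $\sup_{N\ge 1}|\cdots|$, the sublevel complements $\{F_{N_0}>\lambda\}$ increase to $\{\sup_{N\ge1}|\cdots|>\lambda\}$, so by monotone convergence it suffices to prove the weak-type bound for each fixed $N_0$ with a constant independent of $N_0$. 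Put $L=\max(P(N_0),Q(N_0))$, the largest shift that can occur for $N\le N_0$.

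Next comes the freezing/truncation step. Fix a large integer $M$. For each $x\in X$ define finitely supported (hence $\ell^2$) sequences on $\Z$ by $\phi_x(j)=f(T^jx)\1_{[1,M+L]}(j)$ and $\psi_x(j)=g(T^jx)\1_{[1,M+L]}(j)$. The point is that for $1\le j\le M$ and $1\le n\le N_0$ all indices $j+P(n),\,j+Q(n)$ lie in $[1,M+L]$, so the truncation is invisible and
\[
\sup_{1\le N\le N_0}\Big|\frac1N\sum_{n=1}^{N} a_n\,\phi_x(j+P(n))\,\psi_x(j+Q(n))\Big|=F_{N_0}(T^jx).
\]
Applying the hypothesis with $J=M$, and using that $\{F_{N_0}(T^jx)>\lambda\}$ is contained in the event driven by $\sup_{N\ge1}$, I obtain for every $x$
\[
\lambda\,\Big|\{1\le j\le M:\ F_{N_0}(T^jx)>\lambda\}\Big|\le C\,\|\phi_x\|_{\ell^2}\,\|\psi_x\|_{\ell^2}=C\Big(\sum_{j=1}^{M+L}|f(T^jx)|^2\Big)^{1/2}\Big(\sum_{j=1}^{M+L}|g(T^jx)|^2\Big)^{1/2}.
\]

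Now I would integrate in $x$ against $\mu$. Because $T$ preserves $\mu$, each summand satisfies $\mu\{F_{N_0}(T^jx)>\lambda\}=\mu\{F_{N_0}>\lambda\}$, so the integral of the left side is exactly $\lambda M\,\mu\{F_{N_0}>\lambda\}$. On the right, Cauchy--Schwarz in $\mu$ together with $\int|f(T^jx)|^2\,d\mu=\|f\|_2^2$ (again measure preservation) bounds the integral by $C(M+L)\|f\|_2\|g\|_2$. Dividing by $M$ yields $\lambda\,\mu\{F_{N_0}>\lambda\}\le C\frac{M+L}{M}\|f\|_2\|g\|_2$, and letting $M\to\infty$ with $N_0$ (hence $L$) fixed removes the boundary factor. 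Finally, letting $N_0\to\infty$ through the monotone convergence reduction of the first step gives the claimed weak-type inequality, uniformly in $\lambda$.

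I expect the main obstacle to be purely the bookkeeping of the two truncation scales: one must choose $M$ and $N_0$ so that the discrete maximal function over $\{1,\dots,M\}$ coincides \emph{exactly} with the pushed-forward dynamical one, which is precisely what forces enlarging the support window to $[1,M+L]$, and then verify that the resulting discrepancy contributes only the harmless factor $(M+L)/M\to 1$. The only genuine analytic inputs are measure preservation of $T$ (turning the spatial sum into a clean factor $M$ on the left and into $\|f\|_2,\|g\|_2$ on the right) and Cauchy--Schwarz to split the mixed $\ell^2\times\ell^2$ norm under the integral; everything else is the transference mechanism.
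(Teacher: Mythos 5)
Your proof is correct and follows essentially the same route as the paper's transference mechanism (the paper states Proposition 3.2 as a principle and carries out the identical argument in the proof of Theorem 4.5): freeze the orbit, truncate to a window large enough to absorb the polynomial shifts, apply the discrete weak-type bound with $p=q=2$, integrate using measure preservation and Cauchy--Schwarz, and let $M\to\infty$, then $N_0\to\infty$. One nitpick: take $L=\max_{1\le n\le N_0}\max\big(P(n),Q(n)\big)$ rather than $\max\big(P(N_0),Q(N_0)\big)$, since $P,Q$ need not be monotone on $[1,N_0]$; in fact your window bookkeeping is more careful than the paper's own truncation to $[-2\bar N,2\bar N]$, which is too small once the shifts $P(n),Q(n)$ grow faster than linearly.
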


\noindent It is easy to check that Proposition \ref{CalderonP} and \ref{CalderonP2} hold for the homogeneous $k$-multilinear ergodic averages, for any 
$k \geq 3$. Moreover, it is easy to state and to prove the finitary version where $\Z$ is replaced by $\Z/J\Z$ and the functions $\phi$ and 
$\psi$ with $J$-periodic functions. 
\section{Main result and its proof}
The subject of this section is to state and to prove the main result of this paper and its consequences. We begin by stating our main result. 

\begin{thm}\label{Mainofmain}Let $(X,\ca,\mu,T)$ be an ergodic dynamical system, let $P,Q$ be a non-constant polynomials mapping natural numbers to themselves. Then, for any $f,g \in L^2(X)$, for almost all $x \in X$,
	$$\frac1{N}\sum_{n=1}^{N}\bmnu(n) f(T^{P(n)}x)g(T^{Q(n)}x) \tend{N}{+\infty}0,$$
	where $\bmnu$ is the Liouville function or the M\"{o}bius function.
\end{thm}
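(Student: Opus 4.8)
The plan is to implement the scheme announced in the introduction: transfer the almost-everywhere assertion to a uniform maximal inequality on the integer-shift model through the Calder\'on principle (Propositions \ref{CalderonP} and \ref{CalderonP2}), and then prove that inequality on the Fourier side, where Lemma \ref{GTao} supplies the arithmetic decay. Concretely, by Proposition \ref{CalderonP2} it is enough to work with the finitary model on $\Z/J\Z$: for $J$-periodic $\phi,\psi$ I would set
\[
B_N(j)=\frac1N\sum_{n=1}^{N}\bmnu(n)\,\phi(j+P(n))\,\psi(j+Q(n)),
\]
and seek a weak-type bound for $\sup_{N}|B_N|$ whose constant does \emph{not} depend on $J$. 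This $J$-independence is precisely what transference requires and is the whole purpose of passing to the finitary picture.

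First I would diagonalize the shift. Expanding $\phi=\sum_\xi\widehat\phi(\xi)e_\xi$ and $\psi=\sum_\eta\widehat\psi(\eta)e_\eta$ with $e_\xi(j)=e^{2\pi i j\xi/J}$ turns $B_N$ into
\[
B_N(j)=\sum_{\xi,\eta}\widehat\phi(\xi)\,\widehat\psi(\eta)\,e^{2\pi i j(\xi+\eta)/J}\,S_N(\xi,\eta),\qquad
S_N(\xi,\eta)=\frac1N\sum_{n=1}^{N}\bmnu(n)\,e^{2\pi i(\xi P(n)+\eta Q(n))/J}.
\]
For every pair the exponent $\xi P(n)+\eta Q(n)$ is an integer-valued polynomial in $n$, and it is non-constant unless $(\xi,\eta)=(0,0)$ because $P,Q$ are non-constant with positive leading coefficients and $\xi,\eta\ge0$; since this polynomial has bounded degree, Lemma \ref{GTao} gives $|S_N(\xi,\eta)|\le C_A/\log^{A}N$ uniformly in $(\xi,\eta)$ and in $J$, while the single term $(\xi,\eta)=(0,0)$ is controlled by the prime number theorem \eqref{E:la}. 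Reading off the Fourier coefficients of $B_N$ exhibits the product of observables as a convolution $|\widehat\phi|*|\widehat\psi|$ on the dual group; Young's and the sharp Babenko--Beckner inequalities then convert $\|B_N\|$ back into $\|\phi\|_{p}\|\psi\|_{q}$, with the gain $\sup_{\xi,\eta}|S_N(\xi,\eta)|$ out front and with constants free of $J$. This already delivers $B_N\to0$ for each fixed $J$ together with the correct fixed-$N$ operator bound.

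The hard part is to upgrade this fixed-$N$ estimate to a bound on the maximal function $\sup_N|B_N|$ while keeping the constant independent of $J$: a crude union bound over the $J^2$ frequencies, or over the $\log N$ dyadic scales, would reinstate exactly the divergent factors that must be avoided. This is where Bourgain's oscillation method enters. I would cut the range of $N$ into dyadic blocks $[2^k,2^{k+1})$, dominate the sampled values $\sup_k|B_{2^k}|$ by the square function $\big(\sum_k|B_{2^k}|^2\big)^{1/2}$---which by Parseval on the dual group and the bound $|S_{2^k}|\le C_A/(k\log2)^{A}$ is summable in $k$ as soon as $A>1/2$---and then control the within-block oscillation $\sup_{2^k\le N<2^{k+1}}|B_N-B_{2^k}|$ by a second, comparable square function, now using that $S_N-S_{2^k}$ inherits the same Green--Tao decay. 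The power-of-log saving of Lemma \ref{GTao}, being more than square-summable across scales, is exactly what lets both square functions be summed with a constant depending only on $A$; assembling them yields the desired $J$-free maximal inequality.

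Finally I would run the transference in reverse. The $J$-independent weak-type maximal inequality transfers, via Proposition \ref{CalderonP2}, to the weak-type maximal bound for the genuine ergodic averages $\tfrac1N\sum_{n=1}^{N}\bmnu(n)f(T^{P(n)}x)g(T^{Q(n)}x)$, and the pointwise vanishing on the (finite-spectrum, hence dense) trigonometric functions transfers through the Banach principle to a.e. convergence to $0$ for all $f,g\in L^2$. The Liouville case is then recovered from the M\"obius case by inserting the identity $\bml(n)=\sum_{d:d^2\mid n}\bmu(n/d^2)$, exactly as noted after Lemma \ref{GTao}. I expect the oscillation and square-function bookkeeping of the previous paragraph---tracking every constant to be genuinely free of $J$---to be the delicate core of the argument and the most plausible site of the gap the paper sets out to repair.
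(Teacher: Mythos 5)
Your skeleton---Calder\'on transference to the shift model, diagonalization by the discrete Fourier transform, the uniform Green--Tao bound on $S_N(\xi,\eta)$ (the paper's $D_{N,k,l}$), and conversion back to $\|\phi\|_{\ell^4}\|\psi\|_{\ell^4}$ via the convolution structure---is exactly the paper's route to its fixed-$N$ estimate $\|B_N\|_{\ell^2}\le C_A\log^{-A}N\,\|\phi\|_{\ell^4}\|\psi\|_{\ell^4}$. The first genuine gap is your within-block control. You propose to bound $\sup_{2^k\le N<2^{k+1}}|B_N-B_{2^k}|$ by ``a second, comparable square function'' on the grounds that $S_N-S_{2^k}$ inherits the Green--Tao decay. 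But that square function has about $2^k$ summands, each of $\ell^2$-norm $\approx (k\log 2)^{-A}\|\phi\|_{\ell^4}\|\psi\|_{\ell^4}$, so its $\ell^2$-norm is of order $2^{k/2}k^{-A}$: a power-of-log saving can never beat the exponential number of scales inside a dyadic block. Controlling the full supremum inside a block is precisely the hard core of Bourgain's bilinear maximal theorem (L\'epingle-type variational input), and nothing in Lemma \ref{GTao} supplies it. The paper never attempts it: all of its maximal and oscillation functionals ($m_{N_k,N_{k+1}}$ and $M_{N_k,N_{k+1}}$ in Theorems \ref{CalderonI} and \ref{MaximalI}) range only over the lacunary set $I_\rho=\{[\rho^n]\}$; the intermediate $N$ are disposed of at the very end, for $f,g\in L^\infty$, by the trivial estimate $\frac{[\rho^{m+1}]-[\rho^m]}{[\rho^m]}\|f\|_\infty\|g\|_\infty\to(\rho-1)\|f\|_\infty\|g\|_\infty$, followed by letting $\rho$ tend to $1$.

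The second, more decisive gap is your endgame. A weak-type maximal inequality (Proposition \ref{CalderonP2}) yields a.e.\ convergence only in tandem with a dense class on which convergence is already known, and your class---``finite-spectrum, hence dense, trigonometric functions''---does not exist: finite linear combinations of Koopman eigenfunctions are dense in $L^2(X,\mu)$ only for discrete-spectrum systems, and for a weakly mixing $T$ they reduce to the constants; there is also no off-the-shelf Banach principle for bilinear operators, and pointwise statements in the shift model do not transfer (only inequalities do). This is exactly where the paper needs an input absent from your proposal: existence of the a.e.\ limit along $I_\rho$ comes from the $O(\sqrt K)$ oscillation inequality, and the limit is identified as $0$ because the $L^2$-limit vanishes, by Green--Tao's theorem \cite{GreenT} combined with Chu's theorem on weighted polynomial multiple ergodic averages \cite{Chu}; the passage from $L^\infty$ to $L^2$ is then done by hand (Cauchy--Schwarz plus the ergodic theorem applied to $|f-f_1|^2$ and $|g-g_1|^2$), not by a Banach principle. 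A clean repair along your own lines does exist: transfer the fixed-$N$ bound for each fixed $N$ separately (fixed-$N$ Calder\'on transference requires no maximal function), obtaining $\big\|\frac1N\sum_{n=1}^N\bmnu(n)f(T^{P(n)}\cdot)g(T^{Q(n)}\cdot)\big\|_{L^2(\mu)}\le C_A\log^{-A}N\,\|f\|_4\|g\|_4$; for $A>1/2$ these norms are square-summable along $N=[\rho^m]$, which already forces the a.e.\ limit to be $0$ along $I_\rho$ with no dense class at all, and the remaining steps are the paper's crude within-block bound and its explicit density argument.
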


\noindent Consequently, we obtain the following theorem 

\begin{Cor}[\cite{IJL}]Let $(X,\ca,\mu,T)$ be an ergodic dynamical system, and $T_1,T_2$ be powers of $T$. Then, for any $f,g \in L^2(X)$, for almost all $x \in X$,
	$$\frac1{N}\sum_{n=1}^{N}\bmnu(n) f(T_1^nx)g(T_2^nx) \tend{N}{+\infty}0,$$
	where $\bmnu$ is the Liouville function or the M\"{o}bius function.
\end{Cor}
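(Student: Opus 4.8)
The plan is to combine the Calder\'on transference principle with the arithmetic cancellation of $\bmnu$ along polynomial phases supplied by Lemma~\ref{GTao}, the whole being organised through Bourgain's oscillation method exactly as in the unweighted double recurrence theorem. First I would reduce the almost-everywhere statement on $(X,\ca,\mu,T)$ to a statement about the integer shift. By Proposition~\ref{CalderonP2} (and its finitary $\Z/J\Z$ analogue) it suffices to prove a weak-type maximal inequality together with convergence to $0$ on a dense class for the sequences
\[
\frac1N\sum_{n=1}^{N}\bmnu(n)\,\phi(j+P(n))\,\psi(j+Q(n)),\qquad \phi,\psi\in\ell^2(\Z).
\]

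The maximal inequality is the easier of the two ingredients: since $|\bmnu(n)|\le 1$ for every $n$, the weighted bilinear maximal function is dominated pointwise by the \emph{unweighted} bilinear maximal function of $|\phi|$ and $|\psi|$, so the polynomial bilinear maximal inequality (the polynomial extension of Bourgain's double recurrence theorem \cite{Bourgain-D} and of Lacey's bilinear maximal inequality \cite{Lacey}, as developed in \cite{elabdal-poly}) yields the required bound, which transfers back to $X$ through Proposition~\ref{CalderonP2}. By the Banach principle it then remains only to prove convergence to $0$ on a dense subset of $L^2(X)\times L^2(X)$.

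For the dense-class convergence I would use the discrete Fourier transform as a spectral isomorphism on the periodic model, as advertised in the introduction. Expanding $\phi$ and $\psi$ in the Fourier basis of $\Z/J\Z$ and using multiplicativity of the exponential turns the average into the finite sum
\[
\sum_{k,l}\widehat\phi(k)\,\widehat\psi(l)\,e^{2\pi i(k+l)j/J}\cdot\frac1N\sum_{n=1}^{N}\bmnu(n)\,e^{2\pi i(kP(n)+lQ(n))/J}.
\]
Each inner average is a Liouville/M\"obius sum twisted by the polynomial phase $R_{k,l}(n)=kP(n)+lQ(n)$. When $R_{k,l}$ is non-constant, Lemma~\ref{GTao} forces it to $O(\log^{-A}N)\to 0$; when $R_{k,l}$ is constant (for instance when $k=l=0$, or when the leading parts of $P,Q$ cancel) the Prime Number Theorem \eqref{E:la} gives the same vanishing. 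As the index set is finite, the whole expression tends to $0$ uniformly in $j$. At the level of the dynamical system this settles the functions lying in the Kronecker factor (finite combinations of eigenfunctions), and for the orthogonal, weakly mixing part I would run a van der Corput / generalised von Neumann estimate, reducing the square of the averages to shifted correlations that the same twisted-phase cancellation annihilates.

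The main obstacle I expect is precisely the passage from this finitary, frequency-by-frequency cancellation to a genuine almost-everywhere statement on the weakly mixing part, \emph{uniformly} enough to survive the loss incurred in the period $J$ and in the maximal truncation $\sup_N$. This is exactly where the earlier argument of \cite{IJL} had a gap: one must interchange the arithmetic cancellation of $\bmnu$ with the dynamical diagonal correlations $f(T^{P(n)}x)g(T^{Q(n)}x)$, and this interchange is legitimate only once the oscillation and variation bounds underlying the maximal inequality are in force. Making the uniformity in Lemma~\ref{GTao} cooperate with Bourgain's $\lambda$-separation estimates, rather than merely invoking it for each fixed frequency, is the crux of the proof.
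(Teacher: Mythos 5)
Your proposal overlooks how this statement actually enters the paper: the Corollary is given no independent proof at all --- it is the one-line specialization of Theorem \ref{Mainofmain} obtained by writing $T_1=T^a$, $T_2=T^b$ and taking $P(n)=an$, $Q(n)=bn$. So any self-contained argument must in effect reprove Theorem \ref{Mainofmain} for linear polynomials, and judged as such your sketch has a genuine gap at exactly the point you yourself flag as ``the crux''. The Banach principle you invoke requires almost everywhere convergence on a dense class, not norm convergence. Your treatment of the Kronecker part (eigenfunctions, via Lemma \ref{GTao} and the Prime Number Theorem) does give a.e.\ --- indeed uniform --- convergence to $0$, but for the weakly mixing part a van der Corput / generalized von Neumann estimate only controls $L^2(X)$-norms of the averages; it yields convergence to $0$ in norm (essentially Chu's theorem \cite{Chu} combined with Green--Tao, which the paper cites only to identify the limit), and norm convergence of the full sequence does not give a.e.\ convergence of the full sequence. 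Without a quantitative rate, summable along lacunary times, together with an oscillation or variation bound, the dense-class step is simply not closed, and ``making Lemma \ref{GTao} cooperate with Bourgain's $\lambda$-separation estimates'' is named but not carried out. Your first ingredient is also shakier than you suggest: dominating $|\bmnu(n)|\le 1$ reduces the weighted maximal inequality to the \emph{unweighted} bilinear polynomial maximal inequality, which for genuinely polynomial $P,Q$ is not an available black box (it is the subject of \cite{elabdal-poly} and \cite{TMT}); only in the linear case of this Corollary can you quote Bourgain \cite{Bourgain-D} and Lacey \cite{Lacey}.

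The paper's route avoids both difficulties and differs from yours precisely at the quantitative level: it never splits $L^2$ into Kronecker and weakly mixing parts and never uses the unweighted maximal inequality. Instead, the uniformity in Lemma \ref{GTao} bounds the Fourier coefficients $D_{N,k,l}$ of the weighted bilinear average by $C_A\log^{-A}N$ uniformly in the frequencies, and a convolution/Parseval computation converts this into the $\ell^2(\Z)$ estimate
$$\Big\|\frac1N\sum_{n=1}^{N}\bmnu(n)\,f(\cdot+n)\,g(\cdot-n)\Big\|_{\ell^2(\Z)}\leq \frac{C_A}{\log^A N}\,\big\|f\big\|_{\ell^4(\Z)}\big\|g\big\|_{\ell^4(\Z)},$$
which, summed over lacunary times $N_k$, gives the oscillation inequality of Theorem \ref{CalderonI}; Calder\'{o}n transference (Theorem \ref{MaximalI}) moves it to $(X,\mu)$, the standard oscillation argument then gives a.e.\ convergence along $[\rho^m]$ with limit $0$ for bounded $f,g$, and the sandwich argument between consecutive $[\rho^m]$ plus the Cauchy--Schwarz/ergodic-theorem approximation handles general $L^2$ functions. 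In other words, the decay of the \emph{weighted} averages is proved directly, with a rate, for all functions at once; this is exactly what replaces your missing dense-class step, and it is the content the earlier argument of \cite{IJL} was lacking.
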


Before proceeding to the proof of Theorem \eqref{Mainofmain}, we recall the following notations.\\

Let $T$ be a map acting on a probability space $(X,\mathcal{A},\mu)$ and for any any $\rho>1$, we will denote by $I_\rho$ the set $\Big\{\En{\rho^n}, n \in \N \Big\}.$ The maximal functions are defined by \\
\begin{align*}
&M_{N_0,\bar{N}}(f,g)(x)=\\
&\sup_{\overset{ N_0 \leq N \leq \bar{N}}{N \in I_\rho}}\Big|\frac1{N}\sum_{n=1}^{N}\bmnu(n) f(T^{n}x) g(T^{-n}x)
-\frac1{N_0}\sum_{n=1}^{N_0}\bmnu(n) f(T^{n}x) g(T^{-n}x)\Big|,
\end{align*}
and
\begin{align*}
&M_{N_0}(f,g)(x)=\\
&\sup_{\overset{N \geq N_0}{N \in I_\rho}}\Big|\frac1{N}\sum_{n=1}^{N}\bmnu(n) f(T^{n}x) g(T^{-n}x)-
\frac1{N_0}\sum_{n=1}^{N_0}\bmnu(n) f(T^{n}x) g(T^{-n}x)\Big|.
\end{align*}

Obviously, 
$$\lim_{\bar{N} \longrightarrow +\infty}M_{N_0,\bar{N}}(f,g)(x)=M_{N_0}(f,g)(x).$$

\noindent For the shift $\Z$-action, the maximal functions are denoted by $m_{N_0,\bar{N}}(\phi,\psi)$ and $m_{N_0}(\phi,\psi)$.\\

\noindent At this point, we restate the key theorem in the proof of our main result.  
\begin{thm}\label{CalderonI}For any $\rho>1$, 
	for any $f,g \in \ell^4(\Z)$, 
	for any $K \geq 1$, we have
	\begin{eqnarray}\label{BourgainMaximal1} 
	\sum_{k=1}^{K}
	\Big\|m_{N_k,N_{k+1}}(f,g)\Big\|_{\ell^2(\Z)} < C. \sqrt{K}.
	\big\|f\big\|_{\ell^4(\Z)}\big\|g\big\|_{\ell^4(\Z)},
	\end{eqnarray}
	where $\bmnu$ is the Liouville function or the M\"{o}bius function and $C$ is an absolutely constant which depend only on $\rho$.
\end{thm}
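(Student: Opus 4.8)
The plan is to prove \eqref{BourgainMaximal1} by reducing it to a single $K$-uniform square-function estimate, to which the factor $\sqrt{K}$ is attached for free by Cauchy--Schwarz. Write $A_N(\phi,\psi)(j)=\frac1N\sum_{n=1}^N\bmnu(n)\phi(j+n)\psi(j-n)$, so that (the scales $N_k$ being consecutive elements of the lacunary set $I_\rho$) the oscillation term is controlled by the consecutive differences, $m_{N_k,N_{k+1}}(\phi,\psi)=\big|A_{N_{k+1}}(\phi,\psi)-A_{N_k}(\phi,\psi)\big|$ (a block containing several lacunary scales is handled by a further, bounded, lacunary summation). Cauchy--Schwarz in $k$ gives $\sum_{k=1}^K\|m_{N_k,N_{k+1}}(\phi,\psi)\|_{\ell^2}\le\sqrt{K}\,\big(\sum_{k=1}^K\|m_{N_k,N_{k+1}}(\phi,\psi)\|_{\ell^2}^2\big)^{1/2}$, so it suffices to prove $\sum_{k\ge 1}\|A_{N_{k+1}}(\phi,\psi)-A_{N_k}(\phi,\psi)\|_{\ell^2}^2\le C\,\|\phi\|_{\ell^4}^2\|\psi\|_{\ell^4}^2$ uniformly in $K$.

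Next I would pass to the spectral side, as in \cite{elabdal-poly}: $A_N$ is a bilinear Fourier multiplier with symbol the exponential sum $S_N(\theta)=\frac1N\sum_{n=1}^N\bmnu(n)e^{2\pi i n\theta}$, so $A_{N_{k+1}}-A_{N_k}$ carries the symbol $D_k(\theta)=S_{N_{k+1}}(\theta)-S_{N_k}(\theta)$. The arithmetic input enters here: by the Davenport--Hua bound \eqref{vin} (and its nilsequence refinement Lemma \ref{GTao}, which is what permits the general polynomials $P,Q$ afterward) one has $\sup_\theta|S_N(\theta)|\le C_A\log^{-A}N$ for every $A>0$; since $N_k\asymp\rho^k$ this yields $\|D_k\|_\infty\le C_A'\,k^{-A}$. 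The $D_k$ are moreover localised near the origin at the scale $N_k^{-1}$ and thus form a finitely overlapping, almost-orthogonal family.

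The core is then the square-function estimate, whose two competing features must be harvested from different variables. The spatial $\ell^4$ norms come from Cauchy--Schwarz in the position variable $j$: for each fixed shift $n$, $\sum_j|\phi(j+n)|^2|\psi(j-n)|^2\le\|\phi\|_{\ell^4}^2\|\psi\|_{\ell^4}^2$, which is exactly the quartic quantity on the right. The decay and summability in $k$ come instead from the summation variable $n$: applying Parseval in $n$ exposes the symbol $D_k$, the pointwise product $\phi(j+\cdot)\psi(j-\cdot)$ is read as a convolution on the dual group (so that the Babenko--Beckner sharp Hausdorff--Young inequality governs it, as anticipated in the introduction), and $\|D_k\|_\infty\lesssim k^{-A}$ together with the almost-orthogonality of $\{D_k\}$ forces $\sum_k\|A_{N_{k+1}}-A_{N_k}\|_{\ell^2}^2$ to converge. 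It is cleanest to run this in the finitary model ($\Z$ replaced by $\Z/J\Z$ and $\phi,\psi$ taken $J$-periodic, as in the remark after Proposition \ref{CalderonP2}), where the $\ell^4$ norms are meaningful for non-decaying data and all Fourier manipulations are finite sums.

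The main obstacle is precisely the tension between these two mechanisms. The spatial norm to be controlled is $\ell^4$, which is \emph{stronger} than $\ell^2$, and Hausdorff--Young is unavailable in the direction one would want because $4>2$; hence the naive route---bounding $|D_k|$ by $\|D_k\|_\infty$ and closing with Young's inequality---only reaches an $\ell^2$ (or $\ell^{4/3}$) norm and discards the $\ell^4$ gain, whereas the clean physical-side Cauchy--Schwarz that produces the $\ell^4$ norms simultaneously destroys the cancellation of $\bmnu$. Reconciling the two---keeping the arithmetic cancellation encoded in $\|D_k\|_\infty$ while retaining genuine $\ell^4$ control---is the delicate step, and I expect it to be exactly where the gap in \cite{IJL} arose; it is resolved not by one crude symbol bound but by the combined convolution/Babenko--Beckner analysis together with the lacunary almost-orthogonality of $\{D_k\}$.
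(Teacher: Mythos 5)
Your reduction (Cauchy--Schwarz in $k$, passage to the Fourier side, the Davenport--Hua/Green--Tao sup bound on the symbol giving $\|D_k\|_\infty\lesssim k^{-A}$) is a sound beginning, and your diagnosis of the central difficulty is accurate: once you bound the symbol by its sup norm you put absolute values inside the Fourier sums, and no Young/Hausdorff--Young manipulation will then recover the $\ell^4$ norms of $f$ and $g$ (the map replacing $\F(f)$ by $|\F(f)|$ preserves $\ell^2$ but can inflate $\ell^4$ enormously). The problem is that your proposal stops exactly there. The entire theorem is the per-scale estimate
\begin{equation*}
\Big\|\frac1{N}\sum_{n=1}^{N}\bmnu(n)\,\phi(\cdot+P(n))\,\psi(\cdot+Q(n))\Big\|_{\ell^2}
\;\leq\; \frac{C_A}{\log^{A}N}\,\big\|\phi\big\|_{\ell^4}\big\|\psi\big\|_{\ell^4},
\end{equation*}
and your argument for it consists of the sentence that the tension is ``resolved \dots by the combined convolution/Babenko--Beckner analysis together with the lacunary almost-orthogonality of $\{D_k\}$.'' That is a list of tools, not a proof; no inequality is derived from them. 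The paper resolves the same tension by a concrete device: it introduces the off-diagonal kernel $L_N$ and isolates the estimate \eqref{Blkey} (equivalently \eqref{Blkey2}, \eqref{Blkey3}), in which the \emph{phases} of $\F(f)$ and $\F(g)$ are retained rather than discarded, so that the convolution theorem and Parseval convert $\big\|\F(f)*\F(g)\big\|_2$ into $\|fg\|_2\leq\|f\|_4\|g\|_4$; the estimate \eqref{Blkey} itself is obtained by Bourgain's circle-method analysis of $\widehat{L_N}$ (following Wierdl and Cuny--Weber, with the two-variable approximation of $(\theta,\tau)$ handled as in \cite[Proposition 7.2]{Cuny-Weber}), yielding even the maximal form \eqref{BNWCW}. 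After that, summability over the scales in $I_\rho$ comes solely from \eqref{DH}, i.e.\ from $\log^{-A}N_k\approx k^{-A}$; no square function and no Cauchy--Schwarz in $k$ is needed, and the factor $\sqrt K$ is simply slack in the statement.

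A second, independent, problem: the almost-orthogonality you lean on rests on a false premise. Localisation of the symbol near the origin at scale $N^{-1}$ (or near rationals with small denominators) is a feature of \emph{unweighted} Dirichlet/Fej\'er-type kernels; for the M\"{o}bius- or Liouville-weighted sums, the content of \eqref{vin} and Lemma \ref{GTao} is precisely that $S_N$ is uniformly small on the whole circle, with no concentration structure at all. So the family $\{D_k\}$ is not ``finitely overlapping'' in any exploitable sense --- and it does not need to be, since $\|D_k\|_\infty\lesssim_A k^{-A}$ already makes any per-scale bound summable in $k$. In short, the decay in $k$ is the easy part of this theorem; the hard part is producing the $\ell^4$ norms without destroying the arithmetic cancellation, and that part is missing from your argument.
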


\begin{proof}We proceed by using the finitary method. For that, let 
	$J$ be a large integer, $f,g \in \ell^2(\Z)$ and $p \geq 1$.  We put 
	$$\Z_{J}= \Z/J\Z,$$
	$$\E_{\Z_{J}}(f)=\frac{1}{J}\sum_{j=1}^{J}f(j),$$
	$$\big\|f\big\|_{\ell^p(\Z_{J})}=\Big(\frac{1}{J}\sum_{j=1}^{J}|f(j)|^p\Big)^{\frac1{p}}.$$
	Moreover, as customary , we will denote by $\F$ the discrete Fourier transform on $\Z_{J}$.  
	We recall that
	$$\ds \F(f)(\chi)=\frac{1}{J}\sum_{n \in \Z_{J}}f(n)\chi(-n),$$ for $\chi \in {\widehat{\Z_{J}}}$, we still denote by $f$ the $J$-periodic function associated to $f$. 
	We further have, by the inverse Fourier formula, the following
	$$
	f(j)=\sum_{\chi \in {\widehat{\Z_{J}}}} \F(f)(\chi)\chi(j).
	$$
	We further identify ${\widehat{\Z_{J}}}$ with $\Z_J$ and we put 
	$$\chi_k(n)=e^{2 i \pi  \frac{kn}{J}},\, \, k=0,\cdots, J-1.$$
	\noindent Obviously, we have
	\begin{align}\label{conv}
	&\frac1{N}\sum_{n=1}^{N}\bmnu(n) f(j+P(n))g(j+Q(n))\notag \\
	&= \sum_{k,l=0}^{J-1}
	 \F(f)(\chi_k)  \F(g)(\chi_l) \Big(\frac{1}{N} \sum_{n=1}^{N}\bmnu(n)\chi_{k}(P(n) \chi_{l}(Q(n))\Big)\chi_{k+l}(j) ,
	\end{align}
	Put 
	$$D_{N,k,l}=\frac{1}{N} \sum_{n=1}^{N}\bmnu(n)\chi_{k}(P(n) \chi_{l}(Q(n)).$$
	Therefore, by Lemma \eqref{GTao}, for each $A>0$,  we have
	\begin{eqnarray}\label{DH}
		\Big|D_{N,k,l}\Big| \leq \frac{C_A}{{\big(\log(N)\big)^{A}}},~~~~ \forall k,l \in \Z.
	\end{eqnarray}
 	Moreover, a straightforward computation gives
 \begin{align}\label{square}
 	&\Big|\frac1{N}\sum_{n=1}^{N}\bmnu(n) f(j+P(n))g(j+Q(n))\Big|^2 \notag\\
 	&=\sum_{k,l,k',l'=0}^{J-1}
 	\F(f)(\chi_k)  \F(g)(\chi_l) \overline{\F(f)(\chi_{k'}) } \,\overline{\F(f)(\chi_{l'}) }D_{N,k,l}\overline{D_{N,k',l'}}\chi_{(k+l)-(k'+l')}(j).
 \end{align}
 Whence
\begin{align}\label{square2}
&\sum_{j=0}^{J-1}\Big|\frac1{N}\sum_{n=1}^{N}\bmnu(n) f(j+n)g(j-n)\Big|^2 \notag \\
&=J \sum_{k+l=k'+l'}^{J-1}
\F(f)(\chi_k)  \F(g)(\chi_l) \overline{\F(f)(\chi_{k'}) } \,\overline{\F(f)(\chi_{l'}) }D_{N,k,l}\overline{D_{N,k',l'}}\chi_{(k+l)-(k'+l')}(j),
\end{align} 	 
since
$$\sum_{j=0}^{J-1}\chi_{m}(j)=\begin{cases}
	0 &\textrm{if $m \neq 0$} ,\\
	J & \textrm{if~not.}
\end{cases}
$$
Therefore,
\begin{align}\label{square5}
&\frac{1}{J} \sum_{j=0}^{J}\Big|\frac1{N}\sum_{n=1}^{N}\bmnu(n) f(j+n)g(j-n)\Big|^2 \\
&=\sum_{k+l=k'+l'}^{J-1}
\F(f)(\chi_k)  \F(g)(\chi_l) \overline{\F(f)(\chi_{k'}) } \,\overline{\F(f)(\chi_{l'}) }D_{N,k,l}\overline{D_{N,k',l'}},\notag\\
&=\sum_{s=0}^{J-1}  \Big|
\sum_{k=0}^{J-1}\F(f)(\chi_k) \F(g)(\chi_{s-k}) D_{N,k,s-k}\Big|^2.\notag
\end{align} 
Now, following Bourgain's approach, for each polynomials $P(n)$, we put 
$$K_{N,P}(k)=\frac{1}{N}\sum_{n=1}^{N}\bmnu(n) \delta_{P(n)}(k),$$
and, we define its off-diagonal by
\begin{align}
 K_{N,P} \odot K_{N,P} = \frac{1}{N}\sum_{n=1}^{N}\bmnu(n)\big( \delta_{P(n)} \otimes \delta_{P(n)}\big)	
\end{align}
where $\otimes$ is the usual product measure define on the Cartesian product $\Z_j\times \Z_j$ by 
$$\F(\mu\otimes\nu)(\chi_k,\chi_l)=\F(\mu)(\chi_k)\F(\nu)(\chi_l).$$
In the similar manner, we define $ K_{N,P} \odot K_{N,Q},$ and more generally if $X=\Big\{x_1,\cdots, x_N\Big\}$ and  $K_N= \frac{1}{N}\sum_{n=1}^{N}a_n \delta_{x_j}$, we put
\begin{align}\label{kernel}
K_N\odot K_N= \frac{1}{N}\sum_{i=1}^{N}a_i\big( \delta_{x_i} \otimes \delta_{x_i}\big).
\end{align}
But for technical convenience, we will use the following off-diagonal kernel
$$L_{N}(k,l)=\frac{1}{N}\sum_{n=1}^{N}\bmnu(n) \big(\delta_{P(n)-Q(n)}\otimes\delta_{Q(n)}\big)(k,l),$$
Therefore
$$D_{N,k,s-k}=\F(L_N)(\chi_k,\chi_s).$$
We are going to establish the following
\begin{align}\label{Blkey}
&\sum_{s=0}^{J-1}  \Big|
\sum_{k=0}^{J-1}\F(f)(\chi_k) \F(g)(\chi_{s-k})\F(L_N)(k,s) \Big|^2 \notag \\
&\leq \frac{C_A}{\log(N)^A}
\Big\|\F(f) * \F(g)\|_2,
\end{align}
which is equivalent to the following version on the torus 
\begin{align}\label{Blkey2}
\sum_{s \in \Z}  \Big|\int_{0}^{1} \widehat{f}(\theta) \widehat{g}(\tau-\theta) 
\widehat{L_N}(\theta, \tau) d\theta e^{-i s \tau} d\tau\Big|^2\notag\\
\leq \frac{C_A}{\log(N)^A}\|f g\|_2^2.
\end{align}
This with the help of Parseval equality is equivalent to  
 \begin{align}\label{Blkey3}
  \int_{0}^{1} \Big|\int_{0}^{1} \widehat{f}(\theta) \widehat{g}(\tau-\theta) 
 \widehat{L_N}(\theta, \tau) d\theta \Big|^2 d\tau \notag\\
 \leq \frac{C_A}{\log(N)^A}\|f g\|_2^2.
 \end{align}
Indeed, it is suffice to consider that $f, g \in \ell^2(\Z)$ are supported on $[0,J]$ and compute in the same manner to derive the formula from
$$  
\frac1{N}\sum_{n=1}^{N}\bmnu(n) f(j+n)g(j-n).
$$
Assume that \eqref{Blkey} is proved. Then, by applying the convolution theorem combined with  Parseval equality, we get
\begin{align}\label{Bl2}
&\frac{1}{J} \sum_{j=0}^{J}\Big|\frac1{N}\sum_{n=1}^{N}\bmnu(n) f(j+n)g(j-n)\Big|^2 \notag
\\
&\leq \Big\|fg\Big\|_2^2  ,
\end{align}
Therefore, by applying Cauchy-Schwarz inequality, we  get 
\begin{align}
\frac{1}{J}\sum_{j=0}^{J}\Big|\frac1{N}\sum_{n=1}^{N}\bmnu(n) f(j+n)g(j-n)\Big|^2 \notag\\
\leq \frac{C_A^2}{{\big(\log(N)\big)^{2A}}} \big\|f\big\|_4^2 \big\|g\big\|_4^2
\end{align}
Whence 
\begin{align}
\Big\|\frac1{N}\sum_{n=1}^{N}\bmnu(n) f(j+n)g(j-n)\Big\|_2  
\leq \frac{C_A}{{\big(\log(N)\big)^{A}}} \big\|f\big\|_4 \big\|g\big\|_4
\end{align}
At this point, we need only to establish \eqref{Blkey} or equivalently \eqref{Blkey2}.
For that, we use the  Bourgain's approach of the circle method (see \cite[Chap. 5]{Vaughan}, \cite{Wierdl}, \cite{Cuny-Weber}, \cite{Nair} \footnote{But, as noticed in \cite{Cuny-Weber} there is a small gap in Wierdl's argument which is corrected in \cite[Lemma 6.2]{Cuny-Weber}. We stress also that Young's inequality for convolution play a crucial role in \cite[Proposition 7.2]{Cuny-Weber}, and in our case, we need to deals with the approximation of $(\theta,\tau) \in [0,1)^2.$} to get even the following  
\begin{align}\label{BNWCW}
\sum_{j \in \Z}\sup_{ N  \geq 1}\Bigg|\int_{0}^{1} \Bigg(\int_{0}^{1} \widehat{f}(\theta) \widehat{g}(\tau-\theta) 
\widehat{L_N}(\theta, \tau) d\theta e^{-ij\tau}\Bigg)  d\tau\Bigg|^2 \notag\\
\leq C\|f g\|_2^2.
\end{align}


Now, by applying the same reasoning as in \cite{IJL}, we conclude that 
for any $K \geq 1$, we have
\begin{eqnarray}\label{BourgainMaximal2} 
\sum_{k=1}^{K}
\Big\|m_{N_k,N_{k+1}}(f,g)\Big\|_{\ell^2(\Z)} < C. \sqrt{K}.
\big\|f\big\|_{\ell^4(\Z)}\big\|g\big\|_{\ell^4(\Z)}.
\end{eqnarray}
This complete the proof of the theorem.
\end{proof}
\noindent As a consequence of Proposition \ref{CalderonP} and \ref{CalderonP2}, we have the following theorem. Its proof is similar to the proof of Propositions \ref{CalderonI} and \ref{CalderonP}. But, we provide it for the reader's convenience. 

\begin{thm}\label{MaximalI}Let $(X,\A,T,\mu)$ be an ergodic dynamical system, and let $f,g \in L^4(X,\mu)$ . Then, 
	for any $\rho>1$, 
	for any $K \geq 1$,
	\begin{eqnarray}\label{BourgainMaximal3}
	\sum_{k=1}^{K}\Big\|M_{N_k,N_{k+1}}(f,g)\Big|\Big\|_1< 4C .\sqrt{K}.\big\|f\big\|_{4}\big\|g\big\|_{4},
	\end{eqnarray}
	where $\bmnu$ is the Liouville function or the M\"{o}bius function.
\end{thm}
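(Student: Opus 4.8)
The plan is to transfer the $\ell^2$ oscillation inequality of Theorem \ref{CalderonI} to the dynamical $L^1$ setting by exactly the Calder\'on device underlying Proposition \ref{CalderonP}; the only new feature is that one transfers the \emph{entire} sum of maximal functions at once, so that the $\sqrt{K}$ gain and the $\ell^1\!\to\!\ell^2$ Cauchy--Schwarz interact correctly. Throughout write
$$A_N(x)=\frac1N\sum_{n=1}^{N}\bmnu(n) f(T^{n}x)g(T^{-n}x),$$
so that $M_{N_k,N_{k+1}}(f,g)(x)=\sup_{N_k\le N\le N_{k+1},\,N\in I_\rho}\big|A_N(x)-A_{N_k}(x)\big|$. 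Only the measure preservation of $T$ (not ergodicity) will be used.

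First I would set up the transference window. Fix $K\ge 1$ and let $M$ be a large integer destined to tend to infinity. For each $x\in X$ define the finitely supported sequences $\phi_x(i)=f(T^{i}x)$ and $\psi_x(i)=g(T^{i}x)$ for $|i|\le M+N_{K+1}$, and set them to $0$ elsewhere. For $j\in[-M,M]$ and $1\le n\le N\le N_{k+1}\le N_{K+1}$ one has $j\pm n\in[-M-N_{K+1},M+N_{K+1}]$, so $A_N(T^{j}x)=\frac1N\sum_{n=1}^{N}\bmnu(n)\phi_x(j+n)\psi_x(j-n)$ is precisely the shift average at $j$. Since both $A_N$ and $A_{N_k}$ transfer in this way, the difference and the supremum transfer as well, giving for every $k$ and every $j\in[-M,M]$ the identity $M_{N_k,N_{k+1}}(f,g)(T^{j}x)=m_{N_k,N_{k+1}}(\phi_x,\psi_x)(j)$.

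Next comes the norm bookkeeping. Summing this identity over $k=1,\dots,K$ and over $j\in[-M,M]$, then applying Cauchy--Schwarz in $j$ (there are $2M+1$ terms) followed by Theorem \ref{CalderonI}, yields for each fixed $x$
\begin{align*}
\sum_{j=-M}^{M}\sum_{k=1}^{K}M_{N_k,N_{k+1}}(f,g)(T^{j}x)
&\le \sqrt{2M+1}\,\sum_{k=1}^{K}\big\|m_{N_k,N_{k+1}}(\phi_x,\psi_x)\big\|_{\ell^2(\Z)}\\
&\le \sqrt{2M+1}\;C\sqrt{K}\,\big\|\phi_x\big\|_{\ell^4(\Z)}\big\|\psi_x\big\|_{\ell^4(\Z)}.
\end{align*}
Integrating in $x$ and invoking the invariance of $\mu$, the left-hand side is exactly $(2M+1)\sum_{k=1}^{K}\|M_{N_k,N_{k+1}}(f,g)\|_{1}$. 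For the right-hand side I would use Cauchy--Schwarz in $x$ and then Jensen's inequality (concavity of $t\mapsto t^{1/2}$) with $\int_X|f(T^{i}x)|^4\,d\mu=\|f\|_4^4$, obtaining $\int_X\|\phi_x\|_{\ell^4(\Z)}^2\,d\mu\le W^{1/2}\|f\|_4^2$ where $W=2M+2N_{K+1}+1$, and the analogous bound for $\psi_x$. Collecting everything gives
$$\sum_{k=1}^{K}\big\|M_{N_k,N_{k+1}}(f,g)\big\|_{1}\le \sqrt{\tfrac{W}{2M+1}}\;C\sqrt{K}\,\|f\|_4\|g\|_4,$$
and since $W/(2M+1)\to 1$ as $M\to\infty$, letting $M\to\infty$ produces the asserted bound, with ample room inside the constant $4C$.

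The main obstacle is the boundary effect and nothing else: the identity $M_{N_k,N_{k+1}}(f,g)(T^{j}x)=m_{N_k,N_{k+1}}(\phi_x,\psi_x)(j)$ holds only on the inner window $[-M,M]$, while $\phi_x,\psi_x$ must live on the enlarged window of radius $M+N_{K+1}$ to absorb the shifts $j\pm n$ for all $k\le K$ simultaneously. This mismatch is exactly what produces the ratio $W/(2M+1)$, and the whole argument hinges on this ratio tending to $1$, i.e.\ on the largest averaging length $N_{K+1}$ being negligible against the window size $M$ once $M\to\infty$. The remaining manipulations — the Cauchy--Schwarz in $j$ that converts the $\ell^1$ sum over the window into the $\ell^2$ norm required by Theorem \ref{CalderonI}, and the Jensen/Hölder passage returning from the $\ell^4$ window sums to $\|f\|_4$ and $\|g\|_4$ — are routine and become lossless in the limit.
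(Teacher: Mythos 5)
Your proof is correct and follows essentially the same route as the paper: Calder\'on transference of Theorem \ref{CalderonI} to the dynamical setting via the orbit sequences $\phi_x,\psi_x$, Cauchy--Schwarz to pass between $\ell^1$ and $\ell^2$ over the transference window, and Jensen/H\"{o}lder together with measure preservation to recover $\big\|f\big\|_4\big\|g\big\|_4$. The only difference is cosmetic: the paper fixes the window at radius $2N_{K+1}$ and absorbs the resulting window-ratio factor (at most $\sqrt{2}$) into the constant $4C$, whereas you let the window radius $M$ tend to infinity so the ratio tends to $1$ and you recover the sharper constant $C$.
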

\begin{proof}
	Let $\bar{N} = N_{K+1}$ and $J \gg \bar{N}$.  Put
	$$\phi_x(n)=\left\{
	\begin{array}{ll}
	f(T^{P(n)}x), & \hbox{if $n \in [-2\bar{N},2\bar{N}]$;} \\
	0, & \hbox{if not,}
	\end{array}
	\right.$$
	and
	$$\psi_x(n)=\left\{
	\begin{array}{ll}
	g(T^{Q(n)}x), & \hbox{if $n \in [-2\bar{N},2\bar{N}]$;} \\
	0, & \hbox{if not,}
	\end{array}
	\right.
	$$
	Then, by Theorem \ref{CalderonI}, we have
	\begin{eqnarray*}
		\sum_{k=1}^{K}\Big\|
		m_{N_k,N_{k+1}}(\phi_x,\psi_x)\Big\|_{\ell^2(\Z)}<C \sqrt{K}   \big\|\phi_x\big\|_{\ell^4(\Z)} \big\|\psi_x\big\|_{\ell^4(\Z)}.
	\end{eqnarray*}
	We thus get
	$$\sum_{k=1}^{K} \Big(
	\sum_{|j| \leq \bar{N}} \Big(m_{N_k,N_{k+1}}(\phi_x,\psi_x)(j)\Big)^2\Big)^{\frac12}< C \sqrt{K} \big\|\phi_x\big\|_{\ell^4(\Z)} \big\|\psi_x\big\|_{\ell^4(\Z)},$$
	which can be rewritten as follows
	\begin{align*}
		&\sum_{k=1}^{K} \Big(\sum_{|j| \leq \bar{N}} \Big(M_{N_k,N_{k+1}}(f,g)(T^jx)\Big)^2\Big)^{\frac12} \\
		&< C \sqrt{K}\Big(\sum_{|n| \leq 2 \bar{N}}|f|^4(T^nx)\Big)^{\frac14}
		\Big(\sum_{|n| \leq 2 \bar{N}}|g|^4(T^nx)\Big)^{\frac14} . 
	\end{align*}
	Integrating and applying H\"{o}lder inequality we obtain
	$$\sum_{k=1}^{K}\Big\|M_{N_k,N_{k+1}}(f,g)\Big\|_1<4 C \sqrt{K}\big\|f\big\|_4 \big\|g\big\|_4,$$
	since $T$ is measure preserving, and this finish the proof of the theorem.
\end{proof}

\noindent We proceed now to the proof of our main result (Theorem \ref{Mainofmain}).
\begin{proof}[\textbf{Proof of Theorem \ref{Mainofmain}}.]Without loss of generality, we assume that the map $T$ is totally ergodic, that is, all its powers are ergodic. Let us assume also that $f,g$ are in $L^\infty(X,\mu)$. Therefore, by Theorem \ref{MaximalI}, it is easily seen that 
\begin{align}\label{Key-conv}
\frac{1}{K}\sum_{k=1}^{K}\Big\|M_{N_k,N_{k+1}}(f,g)\Big\|_1 \tend{K}{+\infty}0.
\end{align}
	\noindent Hence, by the standard arguments of oscillation method (see for instance \cite{Nair}, \cite{Thouvenot}, \cite{Demeter}), for almost every point $x \in X$, we have
	\begin{align*}
	\frac1{[\rho^m]}\sum_{n=1}^{[\rho^m]}\bmnu(n)f(T^{P(n)}x)g(T^{Q(n)}x)  \tend{m}{+\infty}0,
	\end{align*}
	since the $L^2$-limit is zero by Green-Tao theorem \cite[Theorem 1.1]{GreenT}  combined with Chu's result  \cite[Theorem 1.3]{Chu}.
	
	\noindent For the reader's convenience, let us point out that the proof in \cite{Thouvenot} and \cite{Demeter} is obtained by contradiction. Indeed, we assume that the almost everywhere convergence does not hold. Then, we construct an increasing sequence $(N_k)$ for which we establish with the help of the Markov trick that \eqref{Key-conv} can not hold.\\	
	Now, it follows that if $[\rho^m]\leq N < {[\rho^{m+1}]+1}$,  
	\begin{eqnarray*}
		&&\Big|\frac1{N}\sum_{n=1}^{N}\bmnu(n)f(T^{P(n)}x)g(T^{Q(n)}x) \Big|\\
		&=&\Big| \frac1{N}\sum_{n=1}^{[\rho^m]}\bmnu(n)f(T^{P(n)}x)g(T^{Q(n)}x) +\\&& \frac1{N}\sum_{n=[\rho^m]+1}^{N}\bmnu(n)f(T^{P(n)}x)g(T^{Q(n)}x)\Big|\\
		&\leq & \Big| \frac1{[\rho^m]}\sum_{n=1}^{[\rho^m]}\bmnu(n)f(T^{P(n)}x)g(T^{Q(n)}x) \Big|+\\&&\frac{\big\|f\big\|_{\infty} \big\|g\big\|_{\infty}}{[\rho^m]} (N-[\rho^m]-1)\\
		&\leq & \Big|\frac1{[\rho^m]}\sum_{n=1}^{[\rho^m]}\bmnu(n)f(T^{P(n)}x)g(T^{Q(n)}x)\Big|+\\&&\frac{\big\|f\big\|_{\infty} \big\|g\big\|_{\infty}}{[\rho^m]} ([\rho^{m+1}]-[\rho^m]).\\
	\end{eqnarray*}
	Letting $m$ goes to infinity, we get
	\[
	\Big| \frac1{[\rho^m}\sum_{n=1}^{[\rho^m]}\bmnu(n) f(T^{P(n)}x)g(T^{Q(n)}x) \Big| \tend{m}{+\infty}0,
	\]
	and
	\[
	\frac{||f||_{\infty}\big\|g\big\|_{\infty}}{[\rho^m]} ([\rho^{m+1}]-[\rho^m])\tend{m}{+\infty}\big\|f\big\|_{\infty} \big\|g\big\|_{\infty}.(\rho-1),
	\]
	for any $\rho>1$.  Letting $\rho \longrightarrow 1$ we conclude that
	\[
	\frac1{N}\sum_{n=1}^{N}\bmnu(n)f(T^{P(n)}x)g(T^{Q(n)}x) \tend{N}{+\infty}0, {\textrm{~~a.e.,}}
	\]
	
	To finish the proof, notice that for any $f,g \in L^2(X,\mu)$, and any $\varepsilon>0$, there exist $f_1,g_1 \in L^{\infty}(X,\mu)$ such that
	$\Big\|f-f_1\Big\|_2 < \sqrt{\varepsilon},$ and $\Big\|g-g_1\Big\|_2 < \sqrt{\varepsilon}$.
	Moreover, by Cauchy-Schwarz inequality, we have
	\begin{eqnarray*}
		&&\Big|\frac1{N}\sum_{n=1}^{N}\bmnu(n)(f-f_1)(T^{P(n)}x)(g-g_1)(T^{Q(n)}x) \Big|\\
		&\leq&  \frac1{N}\sum_{n=1}^{N}\big|(f-f_1)(T^{P(n)}x)\big|\big|(g-g_1)(T^{Q(n)}x)\big|\\
		&\leq& \Big(\frac1{N}\sum_{n=1}^{N}\big|(f-f_1)(T^{P(n)}x)\big|^2\Big)^{\frac12}
		\Big(\frac1{N}\sum_{n=1}^{N}\big|(g-g_1)(T^{Q(n)}x)\big|^2\Big)^{\frac12}
	\end{eqnarray*}
	Applying the ergodic theorem, it follows that for almost all $x \in X$, we have
	\[
	\limsup_{N \longrightarrow +\infty}\Big|\frac1{N}\sum_{n=1}^{N}\bmnu(n)(f-f_1)(T^P(n)x)(g-g_1)(T^{Q(n)}x) \Big|<  \varepsilon.
	\]
	Whence, we can write
	\begin{eqnarray*}
		&&\limsup_{N \longrightarrow +\infty}\Big|\frac1{N}\sum_{n=1}^{N}\bmnu(n)f(T^{P(n)}x)g(T^{Q(n)}x) \Big|\\
		&\leq&
		\limsup_{N \longrightarrow +\infty}\Big|\frac1{N}\sum_{n=1}^{N}
		\bmnu(n)f_1(T^{P(n)}x)g(T^{Q(n)}x)\Big|\\
		&+&
		\limsup_{N \longrightarrow +\infty}\Big|\frac1{N}\sum_{n=1}^{N}\bmnu(n)f(T^{P(n)}x)g_1(T^{Q(n)}x)\Big|\\
		&+&\limsup_{N \longrightarrow +\infty}\Big|\frac1{N}\sum_{n=1}^{N}\bmnu(n)f_1(T^{P(n)}x)g_1(T^{Q(n)}x)\Big|\\
		&\leq& \varepsilon+\limsup_{N \longrightarrow +\infty}\Big|\frac1{N}\sum_{n=1}^{N}
		\bmnu(n)f_1(T^{P(n)}x)g(T^{Q(n)}x)\Big|\\
		&+&
		\limsup_{N \longrightarrow +\infty}\Big|\frac1{N}\sum_{n=1}^{N}\bmnu(n)f(T^{P(n)}x)g_1(T^{Q(n)}x)\Big|.
	\end{eqnarray*}
	We thus need to estimate
	$$\limsup_{N \longrightarrow +\infty}\Big|\frac1{N}\sum_{n=1}^{N}
	\bmnu(n)f_1(T^{P(n)}x)g(T^{Q(n)}x)\Big|,$$
	and
	$$
	\limsup_{N \longrightarrow +\infty}\Big|\frac1{N}\sum_{n=1}^{N}\bmnu(n)f(T^{P(n)}x)g_1(T^{Q(n)}x)\Big|.$$
	In the same manner we can see that
	\begin{eqnarray*}
		&&\limsup_{N \longrightarrow +\infty}\Big|\frac1{N}\sum_{n=1}^{N}
		\bmnu(n)f_1(T^nx)(g-g_1)(T^{Q(n)}x)\Big|\\
		&\leq& \limsup_{N \longrightarrow +\infty}\Big(\frac1{N}\sum_{n=1}^{N}|f_1(T^{P(n)}x)|^2\Big)^{\frac12}
		\limsup_{N \longrightarrow +\infty}\Big(\frac1{N}\sum_{n=1}^{N}|(g-g_1)(T^{Q(n)}x)|^2\Big)^{\frac12}\\
		&\leq& \big\|f_1\big\|_2 \big\|g-g_1\big\|_2\\
		&\leq& \Big(\big\|f\big\|_2+\sqrt{\varepsilon}\Big). \sqrt{\varepsilon}
	\end{eqnarray*}
	This gives
	\begin{eqnarray*}
		&&\limsup_{N \longrightarrow +\infty}\Big|\frac1{N}\sum_{n=1}^{N}
		\bmnu(n)f_1(T^{P(n)}x)g(T^{Q(n)}x)\Big|\\
		&\leq& \Big(\Big\|f\Big\|_2+\sqrt{\varepsilon}\Big). \sqrt{\varepsilon}+
		\limsup_{N \longrightarrow +\infty}\Big|\frac1{N}\sum_{n=1}^{N}
		\bmnu(n)f_1(T^{P(n)}x)g_1(T^{Q(n)}x)\Big|\\
		&\leq& \Big(\big\|f\big\|_2+\sqrt{\varepsilon}\Big). \sqrt{\varepsilon}+0
	\end{eqnarray*}
	Summarizing, we obtain the following estimates
	\begin{eqnarray*}
		&&\limsup_{N \longrightarrow +\infty}\Big|\frac1{N}\sum_{n=1}^{N}\bmnu(n)f(T^{P(n)}x)g(T^{Q(n)}x) \Big|\\
		&\leq& \varepsilon+\Big(\big\|f\big\|_2+\sqrt{\varepsilon}\Big). \sqrt{\varepsilon}+\Big(\big\|g\big\|_2+\sqrt{\varepsilon}\Big). \sqrt{\varepsilon}
	\end{eqnarray*}
	Since $\varepsilon>0$ is arbitrary, we conclude that for almost every $x \in X$,
	\[
	\frac1{N}\sum_{n=1}^{N}\bmnu(n)f(T^{P(n)}x)g(T^{Q(n)}x) \tend{N}{+\infty}0.
	\]
	This complete the proof of the theorem.
\end{proof}
It is noticed in \cite{IJL} that the convergence almost sure holds for the short interval can be obtained by applying the following Zhan's estimation \cite{Ztao}: for each $A>0$, for any $\varepsilon>0$, we have
\begin{equation}\label{Zhantao}
\max_{z \in \T}\left|\displaystyle\sum_{ N \leq n \leq N+M}z^n\bml(n)\right|\leq C_{A,\varepsilon}\frac{M}{\log^{A}(M)}\text{ for some }C_{A,\varepsilon}>0,
\end{equation}
provided that $M \geq N^{\frac58+\varepsilon}$.  Here, 
\begin{xques}we ask on the convergence almost sure in the short interval for the polynomial  bilinear ergodic bilinear averages 
with Liouville and M\"{o}bius weights.
\end{xques}
\begin{rem}In the forthcoming revised version of \cite{elabdal-poly}. The authors will present a simple proof of Bourgain double ergodic theorem by applying the strategy presented here and by adapting it to the proof of Bourgain bilinear ergodic
	theorem for polynomials and polynomials
	in primes as it is stated in that paper.\footnote{The paper \cite{elabdal-poly} was posted on Arxiv on August 2019. Therein,  it is pointed that Babenko-Beckner inequalities will be used in the proof of Bourgain bilinear ergodic theorem for polynomials and polynomials
		in primes in the forthcoming revision of the paper. Very recently (3 August 2020), the authors in \cite{TMT} provided a proof of a partial Bourgain bilinear ergodic
		theorem for polynomials  for $\sigma$-finite measure space. It is seems that therein, the authors extended some ideas introduced by the author based on the Cald\'eron principal combined with the duality argument due to F. Riesz (precisely, $\ell^1$-$\ell^\infty$-duality). A part of the plan presented in \cite{TMT} seems to be in common with that in \cite{elabdal-poly} well be addressed partially in the revised version of \cite{elabdal-poly}.} 
\end{rem}


\begin{thebibliography}{9999}
\bibitem{elabdal-poly}
e. H. el Abdalaoui, Simple proof of Bourgain bilinear ergodic theorem and its extension to polynomials and polynomials in primes, 	arXiv:1908.02281 [math.DS].

\bibitem{IJL}
E. H. el Abdalaoui, On the homogeneous ergodic bilinear averages with M\"{o}bius and Liouville  weights, Illinois Journal of Mathematics Vol. 64, No. 1 (2020), 1-19.

\bibitem{elabdal-F}
	E. H. el Abdalaoui, A non-singular dynamical system without maximal ergodic inequality. J. Math. Anal. Appl. 469 (2019), no. 2, 960–981. arXiv:1406.2608v2 [math.DS].
	
	\bibitem{elabdalDCDS}
	E. H el Abdalaoui, J. Kulaga-Przymus, M. Lemanczyk \& T. de la Rue, The Chowla and the Sarnak conjectures from ergodic theory point of view, 
	Discrete Contin. Dyn. Syst., 37 (2017), no. 6, 2899-2944.

		
	\bibitem{Assani}
	I. Assani, Multiple recurrence and almost sure convergence for weakly mixing dynamical systems, Israel J. Math. 103 (1998), pp. 111-124.
	
	\bibitem{AssaniW}
	 I. Assani, Wiener Wintner ergodic theorems. World Scientific Publishing Co., Inc., River Edge, NJ, 2003.
	 
	
	\bibitem{AssaniDM}
	I. Assani, D. Duncan, and R. Moore, Pointwise characteristic factors for Wiener-Wintner double recurrence theorem, Erg. Th. and Dyn. Sys. volume 36, issue 04, pp. 1037-1066. arXiv:1402.7094.
	
	\bibitem{AssaniM}
	I. Assani, R. Moore, Extension of Wiener-Wintner double recurrence theorem to polynomials,  J. Anal. Math. 134 (2018), no. 2, 597–613.  arxiv:1409.0463v1 [math.DS].
	
	\bibitem{Assani-Nil}
	I. Assani, Pointwise double recurrence and nilsequences. Preprint, 2015,  arXiv:1504.05732v2 [math.DS]. 
	
	
	
	\bibitem{Bourgain-D}
	J. Bourgain, Double recurrence and almost sure convergence, J. Reine Angew. Math. 404
	(1990), pp. 140-161.
	
	\bibitem{Bourgain-IHP}
	J. Bourgain,  Pointwise ergodic theorems on arithmetic sets, with
	an appendix on return time sequences (jointly with H. Fnrstenberg, Y.
	Katznelson, D. Ornstein), Publications Mathématiques de l'I.H.E.S.
	n° 69, 1989, 5-45.
	
	\bibitem{Chu}
	Q. Chu, Convergence of weighted polynomial multiple ergodic averages, Proc. Amer. Math. Soc. 137 (2009), no. 4, 1363-1369.
	
	\bibitem{Cuny-Weber} 
	 C. Cuny and M. Weber, Ergodic theorems with arithmetical weights. Israel J. Math. 217 (2017), no. 1, 139-180.
	
	\bibitem{Da} H. Davenport, \emph{On some infinite series involving arithmetical functions. II},
	Quart. J. Math. Oxf. \textbf{8} (1937),  313--320.
	
	
	
	\bibitem{Demeter}
	C. Demeter, Pointwise convergence of the ergodic bilinear Hilbert transform, Illinois J.Math.
	51.4 (2007), pp. 1123-1158.
	
	\bibitem{Fbook}
	H.  Furstenberg, Recurrence in  ergodic theory  and  combinatorial number theory,  M.  B.  Porter Lectures. Princeton University Press, Princeton,N.J., 1981.
	
	\bibitem{GreenT}
	B. Green, T. Tao, The M\"{o}bius function is strongly orthogonal to nilsequence,. Ann. of Math.
	(2) 175 (2012), no. 2, 541-566.
	
	\bibitem{Hua}
	L. K. Hua, Additive theory of prime numbers, Translations of Mathematical Monographs, Vol. 13 American Mathematical Society, Providence, R.I. 1965.
	
	
	
	
	
	\bibitem{TMT}
	B. Krause, M. Mirek, T. Tao, Pointwise ergodic theorems for non-conventional bilinear polynomial averages,  	arXiv:2008.00857 [math.DS].
	
	\bibitem{Lacey}
	M. Lacey, The bilinear maximal functions map into $L^p$ for $2/3 < p \leq 1$, Ann. of Math. (2) 151 (2000), no. 1, 35--57.
	
	\bibitem{Nair}
	R. Nair, On polynomials in primes and J. Bourgain’s circle method approach to ergodic theorems II, Stud. Math. 105 (1993), no. 3, 207-233.
	
	\bibitem{Sarnak}
	P. Sarnak, M\"{o}bius randomness and dynamics. Not. S. Afr. Math. Soc. 43 (2012), no. 2, 89-97.
	
	
	\bibitem{Titchmarsh}
	E. C. Titchmarsh, \emph{The theory of the Riemann zeta-function,} Second edition. Edited and with a preface
	by D. R. Heath-Brown. The Clarendon Press, Oxford University Press, New York, 1986.
	
	
	\bibitem{Thouvenot}
	J-P. Thouvenot, La convergence presque s\^ure des moyennes ergodiques suivant certaines sous-suites d'entiers (d'apr\` es Jean Bourgain). (French) [Almost sure convergence of ergodic means along some subsequences of integers 	(after Jean Bourgain)] S\'eminaire Bourbaki, Vol. 1989/90. Ast\'erisque No. 189-190 (1990), Exp. No. 719, 133-153.
	
	\bibitem{Vaughan}
	R.C. Vaughan, The Hardy-Littlewood circle method, Cambridge University Press, 1981.
	
	\bibitem{Wierdl}
	M. Wierdl, \emph{ Pointwise ergodic theorem along the prime numbers.} Israel J. Math. 64 (1988), no. 3, 315-336.
	 
	\bibitem{Ztao}
	T. Zhan, Davenport's theorem in short intervals. Chin. Ann. of Math., 12B(4) 1991, 421-431.
	
	\bibitem{Zorich}
	P. Zorin-Kranich, A nilsequence wiener wintner theorem for bilinear ergodic averages,  arXiv:1504.04647.
	
\end{thebibliography}
\end{document}